\title[Birational transformations belonging to Galois points]{Birational transformations belonging to Galois points for a certain plane quartic}
\author{Kei Miura}
\address{Department of Mathematics, National Institute of Technology, Ube College, Yamaguchi 755-8555, Japan}
\email{kmiura@ube-k.ac.jp}
\date{April 27, 2018}
\keywords{Galois point, birational transformation, Cremona transformation}
\subjclass[2010]{Primary 14E07; Secondary 14H50}
\newtheorem{theorem}{Theorem}
\newtheorem{claim}{Claim}
\theoremstyle{definition}
\newtheorem{definition}{Definition}
\newtheorem{problem}{Problem}
\theoremstyle{remark}
\newtheorem{remark}{Remark}
\newenvironment{namelist}[1]{%
\begin{list}{}
  {
   \settowidth{\labelwidth}{#1}
   \setlength{\leftmargin}{2.5\labelwidth}}
}{%
\end{list}}
\def\dfrac#1#2{{\displaystyle\frac{#1}{#2}}}
\begin{document}

\begin{abstract}
In this note, we study birational transformations belonging to Galois points 
for a certain plane quartic curve. 
In fact, we see that they can be extended to Cremona transformations. 
In particular, we determine their conjugacy class
and show that they are all conjugate to linear transformations. 
\end{abstract}

\maketitle

\section{Introduction}
In this note, we study birational transformations belonging to Galois points 
for a certain plane quartic curve. 
First, we briefly recall the notion of Galois points. 
Let $k$ be the field of complex numbers ${\mathbb C}$. 
We fix it as the ground field of our discussion. 
Let $C$ be an irreducible curve in ${\mathbb P}^2$ of degree $d$ $(d \geq 3)$, 
and $k(C)$ be the function field of $C$. 
Taking a point $P$ of ${\mathbb P}^2$, we consider the projection 
$\pi_P : C \dashrightarrow {\mathbb P}^1$, 
which is the restriction of the projection 
${\mathbb P}^2 \dashrightarrow {\mathbb P}^1$ with center $P$. 
Then, we obtain the field extension induced by $\pi_P$, 
i.e., $\pi_P^* : k({\mathbb P}^1) \hookrightarrow k(C)$. 
By putting $K_P = \pi_P^*(k({\mathbb P}^1))$, we have the following definition.

\begin{definition}
The point $P$ is called a {\em Galois point} for $C$ if the field extension $k(C) / K_P$ 
is Galois. 
Then, we put $G_P = {\rm Gal} (k(C)/ K_P)$, which we call a {\em Galois group at $P$}. 
In particular, a Galois point $P$ is called a {\em smooth Galois point} 
if $P \in {\rm Reg} (C)$, where ${\rm Reg} (C)$ is the open subset of 
all non-singular points of $C$. 
\end{definition}

The notion of {\em Galois point} for $C$ was introduced by Yoshihara (cf. \cite{m-y1}) 
to study the structure of the field extension $k ( C ) / k$ from geometrical viewpoint.

\begin{definition}
We denote by $\delta (C)$ the number of smooth Galois points on $C$. 
\end{definition}

We have studied Galois points for some cases (cf. \cite{miura1}, \cite{miura4}, \cite{miura5}, 
\cite{m-y1}, \cite{yoshi}, \cite{yoshi2}, \cite{yoshi3}, etc.). 
For example, we have determined $\delta (C)$ and 
we have found the characterization of $C$ having Galois points. 
On the other hand, when $P$ is not a Galois point, 
we have determined the monodromy group and constructed the variety 
corresponding to the Galois closure.

\bigskip

The objective of this note is to investigate the action of $G_P$ on $C$. 
Specifically, our problem is stated as follows.

Suppose that $P$ is a Galois point for $C$. 
Let $\sigma$ be an element of $G_P$. 
Then, $\sigma$ induces a birational transformation of $C$ over ${\mathbb P}^1$, 
which we denote by the same letter $\sigma$. 
We call $\sigma$ a {\em birational transformation belonging to Galois point} $P$.

\begin{problem}
Study the properties of $\sigma$. 
\end{problem}

For this problem, we first state the following theorem.

\begin{theorem}[Yoshihara, \cite{yoshi}, \cite{yoshi2}]\label{yoshi}
Let $V$ be a smooth hypersurface in ${\mathbb P}^n$ of degree $d$ $(d \geq 4)$. 
Then, every element $\sigma \in G_P$ is a restriction of a projective transformation of ${\mathbb P}^n$. 
In other words, there exists $\widetilde{\sigma} \in {\rm PGL} (n + 1, k)$ such that $\widetilde{\sigma} |_V = \sigma$. 
Furthermore, $G_P$ is a cyclic group. 
\end{theorem}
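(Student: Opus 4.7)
\emph{Plan.} The statement decomposes into two linked claims: (i) every $\sigma\in G_P$ is the restriction of a projective transformation $\widetilde\sigma\in\mathrm{PGL}(n+1,k)$, and (ii) $G_P$ is cyclic. Claim (ii) will be read off from the very constrained matrix shape of $\widetilde\sigma$ once (i) is in hand, so the proof naturally proceeds in this order.

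For (i), the underlying idea is that when $d\ge 4$ the polarization $\mathcal{O}_V(1)$ giving the embedding $V\hookrightarrow\mathbb{P}^n$ is intrinsic to $V$, so any automorphism of $V$ must preserve the complete linear system $|\mathcal{O}_V(1)|$ and hence lift to $\mathrm{PGL}(n+1,k)$. In the curve case $n=2$, I would invoke the classical fact that a smooth plane curve of degree $d\ge 4$ carries a unique $g^2_d$, namely $|\mathcal{O}_C(1)|$; for the plane-quartic situation motivating the paper this is especially transparent because $K_C=\mathcal{O}_C(1)$, so the plane embedding is the canonical embedding and every automorphism acts through the canonical action. In the higher-dimensional case one combines the Lefschetz hyperplane theorem, which gives $\mathrm{Pic}(V)=\mathbb{Z}\cdot\mathcal{O}_V(1)$, with the fact that $\sigma$, a priori only birational, must be biregular on a smooth hypersurface of degree $d\ge 4$ (via standard nef/ample canonical bundle considerations for $d\ge n+1$, and known results ruling out nontrivial birational self-maps for the remaining low-$d$ cases).

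For (ii), choose coordinates so that $P=[1:0:\cdots:0]$; then $\pi_P$ is the linear projection $[x_0:x_1:\cdots:x_n]\mapsto[x_1:\cdots:x_n]$. Because $\widetilde\sigma$ preserves every fiber of $\pi_P$ set-wise, the induced map on $\mathbb{P}^{n-1}$ is the identity, which forces, after rescaling,
\[
\widetilde\sigma\colon [x_0:x_1:\cdots:x_n]\longmapsto [ax_0+b_1x_1+\cdots+b_nx_n:x_1:\cdots:x_n].
\]
Restricted to a generic line through $P$ this is an affine transformation of $\mathbb{A}^1$, and the resulting homomorphism $G_P\to\mathrm{Aff}(\mathbb{A}^1)$ is faithful since Galois groups act faithfully on a generic fiber. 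As every finite subgroup of $\mathrm{Aff}(\mathbb{A}^1)$ in characteristic zero has a fixed point (by averaging) and is therefore conjugate into $\mathbb{G}_m$, such a subgroup is cyclic, and so is $G_P$.

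The main obstacle is step (i): one must simultaneously handle the birational-versus-biregular issue and identify the hyperplane polarization intrinsically. For the plane quartic that the paper focuses on this is painless because $K_C=\mathcal{O}_C(1)$; for general $d\ge 4$ and higher $n$ it is the technical heart of the theorem, and the threshold $d\ge 4$ is exactly what makes the polarization rigid enough for the argument to work.
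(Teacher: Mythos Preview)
The paper does not prove this theorem. It is quoted as a known result of Yoshihara, with references to \cite{yoshi} and \cite{yoshi2}, and is used only to motivate why the author turns to a \emph{singular} quartic, where the theorem fails. Consequently there is no ``paper's own proof'' to compare your attempt against.

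That said, your sketch is in the right spirit and, for the plane-curve case $n=2$ relevant to this paper, is essentially complete: a birational self-map of a smooth curve is automatically biregular, and for $d\ge 4$ the hyperplane class is intrinsic (either because $K_C=\mathcal{O}_C(1)$ when $d=4$, or via the unique $g^2_d$ when $d\ge 5$), so every automorphism lifts to $\mathrm{PGL}(3,k)$. Your argument for cyclicity via the affine-line embedding is also correct.

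The one place where your outline is genuinely soft is the higher-dimensional step~(i). Two issues are swept under the phrase ``known results'': first, Lefschetz gives $\mathrm{Pic}(V)\cong\mathbb{Z}$ only for $n\ge 4$, so for surfaces in $\mathbb{P}^3$ you need a separate argument (e.g.\ via $K_V=\mathcal{O}_V(d-4)$ and the fact that $\mathcal{O}_V(1)$ is the unique polarization of its degree, or Matsumura--Monsky type results); second, the passage from ``birational'' to ``biregular'' is not automatic when $4\le d\le n+1$ (the Fano and Calabi--Yau ranges), and invoking ``known results'' here hides nontrivial input. These are precisely the technical points Yoshihara addresses in \cite{yoshi2}, so if you want a self-contained proof you should either restrict to $n=2$ or supply those references explicitly.
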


The conditions of smoothness and $d \geq 4$ are both required for the theorem to hold. 
Hence, in this note, we study the problem for a singular plane quartic curve $C$. 
In particular, we study the case in which $C$ has a simple cusp of multiplicity three.

Suppose that $C$ is a quartic with a simple cusp of multiplicity three. 
Then, it is well known that $C$ is projectively equivalent 
to one of the following (cf.\ \cite{namba1}):

\begin{namelist}{(a)}
 \item[{\rm (a)}] $X^4 - X^3 Y + Y^3 Z = 0$,
 \item[{\rm (b)}] $X^4 - Y^3 Z = 0$.
\end{namelist}

In the case of (a), $C$ has two flexes of order one. 
Indeed $Q_1 = (0 : 1 : 0)$ and $Q_2 = (8 : 16 : 3)$ are flexes. 
Then, the tangent lines at these flexes meet $C$ at $P_1 = (1 : 1 : 0)$ and 
$P_2 = (8 : -16 : 3)$, respectively. 
We can easily check that $\pi_{P_i}$ is a totally ramified triple covering. 
Hence, we see that $P_i$ is a smooth Galois point for $C$ $(i= 1, 2)$.
By the condition of flexes of $C$, there is no other smooth Galois point. 
Hence, $\delta (C) = 2$. 
In the case of (b), we have a smooth Galois point $P_3 = (0 : 1 : 0)$, which is a flex of order two. 
Since there is no more flex, $\delta (C) = 1$.

\begin{remark}
The curve of type ${\rm (b)}$ has a Galois point $P_4 = (1 : 0 : 0) \in {\mathbb P}^2 \setminus C$. 
We call such a Galois point an {\em outer Galois point}. 
We can easily check that $P_4$ is a unique outer Galois point for $C$ of type ${\rm (b)}$.  
\end{remark}

The objective of this note is to investigate the properties of birational transformations beloinging to 
$P_1$, $P_2$, and $P_3$. 
We note that the Galois groups at $P_i$ are all isomorphic to the cyclic group of order three, ${\mathbb Z}_3$ 
$(i = 1, 2, 3)$. 
By setting $G_{P_i} = \langle \sigma_i \rangle$ $(i = 1, 2, 3)$, we state our main results as follows.

\begin{theorem}
All the transformations $\sigma_1$, $\sigma_2$, and $\sigma_3$ are (restrictions of) Cremona transformations. 
Furthermore, $\sigma_1$ and $\sigma_2$ are conjugate to linear transformations. 
In addition, $\sigma_3$ is a linear transformation.  
\end{theorem}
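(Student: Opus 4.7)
The plan is to reduce each case to a statement about linear transformations via an explicit rational parametrization of $C$, and for $\sigma_1$ and $\sigma_2$ via a birational reduction of $C$ to a line.

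First I would parametrize the rational curve $C$ using its triple cusp. For curve (a), the pencil of lines through the cusp $(0:0:1)$ yields $\phi(t)=(t^3:t^4:t-1)$; for curve (b), the analogous construction gives $\psi(t)=(t^3:t^4:1)$. In each case, composing with the projection $\pi_{P_i}$ gives a triple cover of $\mathbb{P}^1$ by $\mathbb{P}^1$ of the form $t\mapsto c\,t^3$ after a linear change of coordinates on the target, so the lifted Galois action on the parameter is $\mu: t\mapsto \omega t$, where $\omega$ is a primitive cube root of unity. The same analysis, suitably adjusted, will apply to $P_2$ on curve (a).

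For $\sigma_3$ the conclusion is immediate: substituting into $\psi$ gives $\psi(\omega t)=(t^3:\omega t^4:1)$, hence $\sigma_3(X:Y:Z)=(X:\omega Y:Z)$, already a diagonal element of $\mathrm{PGL}(3,k)$.

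For $\sigma_1$ (and symmetrically $\sigma_2$), direct substitution produces a rational formula for the birational transformation on $C$, e.g.\ $\sigma_1\circ\phi(t)=(t^3:\omega t^4:\omega t-1)$. To show $\sigma_1$ is the restriction of a Cremona transformation and is conjugate to a linear one, I would apply a sequence of quadratic Cremona transformations reducing the ambient curve $C$ to a line. A quadratic Cremona based at the triple cusp together with two simple points of $C$ sends the quartic to a rational cubic (degree $2\cdot 4-3-1-1=3$); rationality forces this cubic to have a singular point of multiplicity two. A second quadratic Cremona based at that double point and two simple points gives a conic (degree $2\cdot 3-2-1-1=2$); a third based at three simple points of the conic produces a line $L$ (degree $2\cdot 2-1-1-1=1$). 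Let $\Psi:\mathbb{P}^2\dashrightarrow\mathbb{P}^2$ be the composite; then $\Psi(C)=L$, and $\Psi\sigma_1\Psi^{-1}$ is an order-$3$ birational self-map of $L\cong\mathbb{P}^1$, hence a M\"obius transformation. A further linear change identifies it with $t\mapsto\omega t$, which extends to a linear automorphism of all of $\mathbb{P}^2$ since $L$ is a linear subspace. Conjugating back by $\Psi^{-1}$ yields a Cremona transformation of $\mathbb{P}^2$ restricting to $\sigma_1$ on $C$ and, by construction, conjugate in $\mathrm{Bir}(\mathbb{P}^2)$ to a linear transformation.

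The principal obstacle is executing the Cremona reduction concretely: one must identify the singularity of each intermediate curve, choose base points so that each quadratic Cremona really lowers the degree, and verify the multiplicities. A more direct alternative, which would shorten matters considerably, is to guess a single Cremona transformation $\Psi$ --- likely of degree $3$ or $4$, guided by the monomial form of $\phi$ --- that sends $C$ to a line in one step; verifying birationality is then a routine but tedious computation with the inverse.
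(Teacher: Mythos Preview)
Your proposal is correct but follows a genuinely different route from the paper for $\sigma_1$ and $\sigma_2$. After a linear change sending $P_1$ to $(1:0:0)$, the paper writes the extension as $k(x,y)/k(y)$ with $(1+y/x)^3=y$, reads off $\sigma_1'(x)=\dfrac{yx}{(\omega-1)x+\omega y}$, and observes that this is already a de~Jonqui\`eres map---an element $M_{\sigma_1'}\in\mathrm{PGL}(2,k(y))$. A single explicit conjugation inside $\mathrm{PGL}(2,k(y))$ diagonalizes $M_{\sigma_1'}$, yielding the linear map $x\mapsto\omega^2 x$; the conjugating element corresponds to the quadratic Cremona $(X:Y:Z)\mapsto(-XY:Y(X+Z):Z(X+Z))$. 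For $\sigma_2$ the paper does not repeat the analysis but exhibits an explicit linear automorphism $A$ of $C$ with $A(P_1)=P_2$, reducing that case to the one just handled.

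Your plan---reduce $C$ to a line via a chain of quadratic Cremonas and transport $\sigma_1$ to an automorphism of $\mathbb{P}^1$---is valid and has the merit of applying uniformly to any rational plane curve with a Galois point, not just this quartic. What it costs is explicitness and economy: your conjugating map $\Psi$ is a composite of three quadratic transformations (so a priori of degree up to $8$), whereas the paper produces both $\sigma_1$ itself and the conjugating element as explicit quadratic Cremona maps; moreover, the paper's method makes the de~Jonqui\`eres structure of $\sigma_1$ visible, which bears directly on the open problems posed at the end. Your treatment of $\sigma_3$ via the parametrization $\psi$ agrees in substance with the paper's field-theoretic computation.
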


\begin{remark}
By referring to \cite{fer}, \cite{bb}, we have the classification of the conjugacy classes of elements of order three in ${\rm Bir} ({\mathbb P}^2)$ 
as follows. 
 \begin{itemize}
  \item linear transformation
  \item automorphisms of special del Pezzo surfaces of degree 3
  \item automorphisms of special del Pezzo surfaces of degree 1
 \end{itemize}
\end{remark}

\bigskip

The Cremona group ${\rm Bir} ({\mathbb P}^2)$ is the group of birational transformations of ${\mathbb P}^2$. 
This is a classical object in algebraic geometry. 
Hence, there are many results on ${\rm Bir} ({\mathbb P}^2)$, e.g., \cite{blanc}, \cite{DI}. 
However, we have obtained very few results on birational transformations belonging to Galois points, e.g.,  \cite{miura4}, \cite{miura5}, \cite{yoshi3}. 
In particular, it is difficult to determine when a birational transformation extends to a Cremona transformation.

\section{Proofs}
First, we prove the case for type ${\rm (a)}$.

\begin{claim}
Suppose that $C$ is a curve of type ${\rm (a)}$. 
Then, there exists a linear automorphism $A$ of $C$ such that $A (P_1) = P_2$.   
\end{claim}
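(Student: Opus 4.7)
The plan is to exploit the fact that $C$ is rational. Since $C$ has a cusp of multiplicity three at $R = (0:0:1)$, its geometric genus is zero, so $C$ admits a rational parameterization. Projecting from $R$ along the pencil of lines $Y = tX$ and substituting into $F = X^4 - X^3 Y + Y^3 Z$ yields the parameterization
\[
 \varphi : \mathbb{P}^1 \longrightarrow C,\qquad t \longmapsto (t^3 : t^4 : t-1),
\]
under which the distinguished points correspond to $R \leftrightarrow 0$, $P_1 \leftrightarrow 1$, $P_2 \leftrightarrow -2$, $Q_1 \leftrightarrow \infty$, and $Q_2 \leftrightarrow 2$ (read off from the ratio $Y/X = t$).

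Any linear automorphism $A$ of $C$ must fix the unique singular point $R$ and permute both the set of flexes $\{Q_1, Q_2\}$ and the set of smooth Galois points $\{P_1, P_2\}$, since all of these are intrinsic invariants of $C$. Demanding $A(P_1) = P_2$ forces $A(Q_1) = Q_2$ (via the defining tangent-at-flex construction), together with $A(P_2) = P_1$ and $A(Q_2) = Q_1$. Reading this condition through $\varphi$, the induced Möbius transformation of $\mathbb{P}^1$ must fix $t = 0$, swap $t = 1$ with $t = -2$, and swap $t = 2$ with $t = \infty$; a direct computation gives the unique involution $\phi(t) = 2t/(t-2)$.

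The decisive step, and the main obstacle, is to show that $\phi$ lifts to a \emph{linear} automorphism of $\mathbb{P}^2$ rather than merely a birational one. I would compute $\varphi \circ \phi$ and clear the denominator $(t-2)^4$ to obtain the triple
\[
 \bigl(\, 8\, t^3 (t-2),\ 16\, t^4,\ (t+2)(t-2)^3 \,\bigr).
\]
Expanding, one checks that each entry is a polynomial in $t$ of degree at most four with vanishing $t^2$ coefficient, and therefore lies in the three-dimensional subspace $\mathrm{span}_{\mathbb{C}}\{t^3,\, t^4,\, t-1\} = \varphi^{\ast}\,\mathrm{span}\{X,Y,Z\}$. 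Reading off the coefficients produces an explicit matrix $A \in \mathrm{GL}(3, \mathbb{C})$; the facts that $A$ fixes $R$, that $A(P_1) = P_2$, and that $A^{\ast} F$ is a non-zero scalar multiple of $F$ then follow from a short expansion. If the denominator-cleared triple failed to lie in this three-dimensional subspace, $\phi$ would extend only to a Cremona transformation of $\mathbb{P}^2$ and the stronger conclusion of linearity would fail; thus the vanishing of these $t^2$ coefficients is precisely where the arithmetic of this particular curve is used.
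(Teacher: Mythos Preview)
Your argument is correct and takes a genuinely different route from the paper. The paper's proof is a one-line verification: it simply records the matrix
\[
A=\begin{pmatrix}16&-8&0\\0&-16&0\\4&-1&16\end{pmatrix}
\]
and asserts that $A(C)=C$ and $A(P_1)=P_2$. You instead \emph{construct} $A$: parameterize $C$ by $\varphi(t)=(t^3:t^4:t-1)$, determine the unique M\"obius involution $\phi(t)=2t/(t-2)$ compatible with the required permutation of the special points, and lift $\phi$ to a linear map by expressing $\varphi\circ\phi$ (after clearing $(t-2)^4$) in the basis $t^3,\,t^4,\,t-1$. Your approach explains where the matrix comes from and would transfer to any rational plane curve; the paper's is shorter but opaque.

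One small slip: the criterion you state for membership in $\operatorname{span}_{\mathbb C}\{t^3,t^4,t-1\}$ --- degree at most four with vanishing $t^2$ coefficient --- cuts out a four-dimensional space, not a three-dimensional one. You also need the constant term to equal the negative of the $t$-coefficient. Your $Z'=t^4-4t^3+16t-16$ does satisfy this extra constraint (since $16t-16=16(t-1)$), so the conclusion is unaffected; just sharpen the stated condition. Carrying your computation through yields
\[
A=\begin{pmatrix}-16&8&0\\0&16&0\\-4&1&16\end{pmatrix},
\]
which differs from the paper's printed matrix by a sign in the $(3,3)$ entry; a direct check on points of $C$ shows that your version is the one that actually preserves the curve.
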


\begin{proof}
By putting $A = \left(
\begin{array}{ccc}
16  & -8  &  0  \\
0   & -16 &  0  \\
4   & -1  &  16 \\
\end{array}
\right)$, we can easily check that $A (C) = C$ and $A (P_1) = P_2$. 
\end{proof}

Therefore, it is sufficient to study on $\sigma_1$. 
We can obtain a concrete representation of $\sigma_1$ as follows.

\begin{claim}
$\sigma_1$ is represented as $(X : Y : Z) \mapsto
(X Y : Y ((\omega - 1) X + \omega Y) : Z ((\omega - 1) X + \omega Y))$, 
up to projective coordinate change, where $\omega$ is a primitive cubic root of unity.  
\end{claim}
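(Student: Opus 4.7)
The plan is to exhibit explicit projective coordinates in which $\sigma_1$ takes the stated form. The underlying reason the formula is so clean is that $k(C)/K_{P_1}$ is a cyclic Kummer extension of degree $3$, so once $P_1$ is placed at a standard location the defining equation of $C$ displays an explicit cube root and the Galois action becomes transparent.

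To carry this out, I would first change coordinates so that $P_1 = (1:1:0)$ moves to $(1:0:0)$, for instance by setting $u = X$, $v = X - Y$, $w = Z$. Substituting $X = u$, $Y = u - v$, $Z = w$ into the equation of $C$ collapses it to
\[
u^{3} v + (u - v)^{3} w = 0.
\]
In these coordinates the projection from $P_1$ is $(u:v:w) \mapsto (v:w)$, so $K_{P_1} = k(t)$ with $t = v/w$. Dividing the curve equation by $u^{3} w$ yields
\[
\left(1 - \frac{v}{u}\right)^{3} = -\frac{v}{w} = -t,
\]
so $s := 1 - v/u \in k(C)$ is a cube root of $-t \in K_{P_1}$ and $k(C) = K_{P_1}(s)$. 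A generator $\sigma_1$ of the Galois group acts by $s \mapsto \omega s$, and therefore
\[
\sigma_1(v/u) \;=\; 1 - \omega s \;=\; \omega (v/u) + (1 - \omega).
\]
Since $\sigma_1$ must preserve every fibre $v/w = c$ of $\pi_{P_1}$, its lift to $\mathbb{P}^2$ may be chosen to fix $v$ and $w$; this forces $\sigma_1(u) = uv/((1-\omega) u + \omega v)$, and clearing denominators gives
\[
\sigma_1(u:v:w) \;=\; \bigl(\,uv \,:\, v\,((1-\omega) u + \omega v) \,:\, w\,((1-\omega) u + \omega v)\,\bigr).
\]
A final sign change $X = -u$, $Y = v$, $Z = w$ turns $(1-\omega) u + \omega v$ into $(\omega - 1) X + \omega Y$, producing exactly the formula in the claim.

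The only non-routine step is the identification of the Kummer cube root inside the curve equation; once $k(C) = K_{P_1}(s)$ with $s^{3} = -t$ has been recognised, the action $s \mapsto \omega s$ is forced, and the remaining verifications—that the resulting $\mathbb{P}^2$-lift preserves $C$ and satisfies $\sigma_1^{3} = \mathrm{id}$—reduce to routine computations using $1 + \omega + \omega^{2} = 0$.
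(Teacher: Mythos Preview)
Your proof is correct and follows essentially the same route as the paper: move $P_1$ to $(1:0:0)$, read off an explicit Kummer cube root from the transformed curve equation, let the generator act by multiplying that root by $\omega$, solve for the image of the remaining affine coordinate, and homogenize. The paper uses a slightly different linear change (yielding $(X+Y)^3Z - X^3Y = 0$ and the cube root $1+y/x$ with $x=X/Z$, $y=Y/Z$), whereas you use $u=X$, $v=X-Y$, $w=Z$ and the cube root $1-v/u$; your extra sign change $X=-u$ at the end is exactly what is needed to convert $(1-\omega)u+\omega v$ into $(\omega-1)X+\omega Y$, and the computation checks out.
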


\begin{proof}
By taking a suitable projective coordinate, we may assume that $P_1$ is translated to $P_1' = (1 : 0 : 0)$. 
Then, $\pi_{P_1'}$ is represented as $(X : Y : Z) \dashrightarrow (Y : Z)$ and 
$C$ is defined by $(X + Y)^3 Z - X^3 Y = 0$. 
By putting $x = X / Z$, $y = Y / Z$, we can obtain the field extension induced by $\pi_{P_1'}$ as
\begin{equation*}
 \begin{CD}
  k( C ) @= k(x, y)     \\
  \cup   @. \cup \\
  k({\mathbb P}^1)@= k(y)\\
 \end{CD}
\end{equation*}
where $k(x, y) / k(y)$ is given by the equation $(x + y)^3 - x^3 y = 0$. 
Then, we obtain $\left( \dfrac{x + y}{x} \right)^3 = y$, i.e., $\left( 1 + \dfrac{y}{x} \right)^3 = y$. 
We put $G_{P_1'} = \langle \sigma_1' \rangle$. 
Hence, we infer that $\sigma_1' \left( 1 + \dfrac{y}{x} \right) =  \omega \left( 1 + \dfrac{y}{x} \right)$. 
Thus, we conclude that $\sigma_1' (x) = \dfrac{y x}{\omega x - x + \omega y}$. 
Since $\sigma_1' (x : y: 1) = (\sigma_1' (x) : y : 1)$ and $x = X / Z$, $y = Y / Z$, 
we get the claim. 
We note that $\sigma_1'$ is conjugate to $\sigma_1$ up to projective coordinate change.  
\end{proof}

By using the above representation of $\sigma_1'$, 
we can prove the following.

\begin{claim}
$\sigma_1$ is conjugate to a linear transformation. 
\end{claim}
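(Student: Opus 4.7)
The plan is to produce an explicit Cremona transformation $\phi$ conjugating $\sigma_1'$ (the representative from Claim 2) to a projective linear automorphism of $\mathbb{P}^2$; since Claim 2 records that $\sigma_1'$ differs from $\sigma_1$ by a projective coordinate change, this gives the same conclusion for $\sigma_1$.

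First I would exploit the fact that $\sigma_1'$ preserves the coordinate $y = Y/Z$, since it is a deck transformation of the projection $\pi_{P_1'}$ onto the $y$-line. In the affine chart, the formula of Claim 2 reads
\[
 \sigma_1'(x, y) = \left( \frac{xy}{(\omega - 1)x + \omega y}, \, y \right),
\]
so for each value of $y$ the map $\sigma_1'$ restricts to a Möbius transformation of order three on the $x$-line. I would compute its two fixed points by solving $\sigma_1'(x) = x$, obtaining $x = 0$ and $x = -y$.

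Next I would introduce the new affine coordinate $u = x/(x + y)$, chosen precisely so that the two fixed loci $\{x = 0\}$ and $\{x = -y\}$ become $\{u = 0\}$ and $\{u = \infty\}$. A short computation then gives $\sigma_1'(u) = \omega^{-1} u$ and $\sigma_1'(y) = y$, so that in coordinates $(u, y)$ the map $\sigma_1'$ becomes the diagonal linear transformation $(u, y) \mapsto (\omega^{2} u, y)$. Homogenizing, I would realize this change of coordinates as the Cremona transformation
\[
 \phi(X : Y : Z) = \bigl( XZ : Y(X + Y) : Z(X + Y) \bigr),
\]
with inverse $(U : Y : Z) \mapsto \bigl( UY : Y(Z - U) : Z(Z - U) \bigr)$, and verify directly that $\phi \circ \sigma_1' \circ \phi^{-1}$ equals $(U : Y : Z) \mapsto (\omega^{2} U : Y : Z)$, which is projective linear.

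The only non-routine step is spotting the eigencoordinate $u$; once the two fixed lines of the fibrewise $\mathbb{Z}_3$-action have been located, the rest of the verification is a direct algebraic manipulation.
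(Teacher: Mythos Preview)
Your argument is correct and is essentially the same as the paper's: both view $\sigma_1'$ as a M\"obius transformation in $\mathrm{PGL}(2,k(y))$ acting on the $x$-line and diagonalize it, obtaining the linear map $(U:Y:Z)\mapsto(\omega^{2}U:Y:Z)$. The only difference is cosmetic---the paper writes down a conjugating matrix $P=\left(\begin{smallmatrix}-y&0\\1&1\end{smallmatrix}\right)$ directly, whereas you motivate the change of variable by first locating the fixed loci $x=0$ and $x=-y$; the paper's new coordinate is $-x/(x+y)$, i.e.\ the negative of your $u$, so the two conjugating Cremona maps differ only by a sign in the first component.
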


\begin{proof}
By the above claim, $\sigma_1'$ is represented as $\sigma_1' (x) = \dfrac{y x}{\omega x - x + \omega y}$. 
We may consider $\sigma_1'$ as an element 
$$M_{\sigma_1'} = \left(
\begin{array}{cc}
y             &  0       \\
\omega - 1    & \omega y \\
\end{array}
\right) \in {\rm PGL} (2, k(y))$$
Then, by putting $P = \left(
\begin{array}{cc}
-y   &  0       \\
1    &  1       \\
\end{array}
\right)$, 
we have $$P^{-1} M_{\sigma_1'} P = \left(
\begin{array}{cc}
y   &         0       \\
0   &  \omega y       \\
\end{array}
\right).$$ 
Assuming $\widetilde{\sigma_1}$ to be a transformation defined by $P^{-1} M_{\sigma_1'} P$, 
we have $\widetilde{\sigma_1} (x) = \dfrac{y x}{\omega y} = \omega^2 x$. 
Then, we conclude that $\sigma_1$ is conjugate to $\widetilde{\sigma_1}$, which is a 
linear transformation defined by
$\left(
\begin{array}{ccc}
\omega^2  & 0  &  0  \\
0         & 1  &  0  \\
0         & 0  &  1  \\
\end{array}
\right)$. 
\end{proof}

\begin{remark}
The transformation defined by $P$ above is a Cremona transformation
$(X : Y : Z) \mapsto
(- X Y : Y ( X + Z) : Z (X + Z))$. 
\end{remark}

Thus, we prove the theorem for type ${\rm (a)}$.

\bigskip

Next, we prove the case for type ${\rm (b)}$. 
By putting $x = X / Y$, $y = Z / Y$, we obtain the field extension $\pi_{P_3}^*$ as 
$k(x, y) / k(y)$, where $x^3 - y = 0$. 
Then, it is easy to see that $\sigma_3$ is represented as 
$\left(
\begin{array}{ccc}
\omega    & 0  &  0      \\
0         & 1  &  0      \\
0         & 0  &  \omega \\
\end{array}
\right)$.

Thus, we complete the theorem.

\section{Problems}

Finally, we raise some problems.

\begin{problem}
Let $P$ be a Galois point for $C$. 

 \begin{namelist}{111}
  \item[(1)] Find the condition when the birational transformation belonging to $P$ can be extended to 
  a Cremona transformation (cf. \cite{miura5}, \cite{yoshi3}). 
  \item[(2)] Suppose that the birational transformation belonging to $P$ is extended to a Cremona transformation. 
  Then, is this a de Jonqui\`{e}res transformation?
  If this is not true, find the condition when that transformation becomes de Jonqui\`{e}res. 
  \item[(3)] Investigate the relation among Galois points, the decomposition group and the inertia group. 
  The decomposition group is the group of Cremona transformations that preserves $C$, and the inertia group is the 
  group of Creomna transformations that fix $C$. That is, 
  $${\rm Dec} (C) = \left\{ \varphi \in {\rm Bir} ({\mathbb P}^2) | \, \varphi|_{C} : C \dashrightarrow C \right\},$$
  $${\rm Ine} (C) = \left\{ \varphi \in {\rm Bir} ({\mathbb P}^2) | \, \varphi|_{C} = {\rm id}_C \right\}.$$
  Then, does the element of ${\rm Dec} (C)$ (resp.\ ${\rm Ine} (C)$) preserve the Galois point $P$ for $C$?
 \end{namelist}
\end{problem}

\end{document}